\newcommand\myshade{85}
\colorlet{mylinkcolor}{blue}
\colorlet{mycitecolor}{red}
\colorlet{myurlcolor}{Aquamarine}
\title{Analytic expressions for some Mellin transforms with their application to prime counting function and interpolation formulas for the zeta function}
\author{Omprakash Atale\footnote{Khandesh College Education Society's
Moolji Jaitha College Jalgaon-425001, Maharashtra, India. E-mail: atale.om@outlook.com}} 
\date{February 26, 2022}
\begin{document}

\maketitle

\begin{abstract}
The aim of our present work here is to present few results in the theory of Mellin transforms using the method that S. Ramanujan used in proving his Master Theorem. Further applications of our results for some number-theoretic functions such as the prime counting function and the zeta function are established. 
\end{abstract}

\newtheorem{theorem}{Theorem}[section]
\newtheorem{corollary}{Corollary}[theorem]
\newtheorem{lemma}[theorem]{Lemma}
\theoremstyle{definition}
\newtheorem{definition}{Definition}[section]
\renewcommand\qedsymbol{$\blacksquare$}

\section{Introduction}
\textbf{1.1.} This is a technical paper which is an application based extension of Ramanujan's Master Theorem which is a powerful tool for evaluating Mellin type integrals [1129, pg. \cite{five}]. It states that if $f$ has expansion of the form 
\begin{equation*}
f\left( x \right) = \sum\limits_{n = 0}^\infty  {{{\left( { - 1} \right)}^n}\frac{{\phi \left( n \right)}}{{n!}}{x^n}}\tag{1.1}    
\end{equation*}
where $\phi(n)$ has a natural and continuous extension such that $\phi(0)\neq0$, then for $s>0$, we have
\begin{equation*}
\int\limits_0^\infty  {{x^{s - 1}}} \left( {\sum\limits_{n = 0}^\infty  {{{\left( { - 1} \right)}^n}\frac{{\phi \left( n \right)}}{{n!}}{x^n}} } \right)dx = \Gamma \left( s \right)\phi \left( { - s} \right).\label{oneptwo}\tag{1.2}
\end{equation*}
where $s$ is any positive integer. Eqn. (\ref{oneptwo}) was communicated by Ramanujan  in his \textit{Quarterly Reports} [\cite{one}, p.298]\cite{two} and was used by him in computing the values of certain definite integrals \cite{three}. We kindly request readers to make themselves familiar with the derivation of Ramanujan's Master Theorem from  [\cite{one}, p.298]\cite{two} whose method of proof is frequently used throughout the paper. Now, for the purpose of application, consider the following binomial expansion for $a, v>0$
\begin{equation*}
{\left( {1 + ax} \right)^{ - v}} = \sum\limits_{n = 0}^\infty  {{a^n}\frac{{\Gamma \left( {v + n} \right)}}{{\Gamma \left( v \right)}}} \frac{{{{\left( { - x} \right)}^n}}}{{n!}}.\tag{1.6}    
\end{equation*}
Employing Eqn. (\ref{oneptwo}) yields
\begin{equation*}
\int\limits_0^\infty  {{x^{n - 1}}} {\left( {1 + ax} \right)^{ - v}}dx = \frac{{\Gamma \left( n \right)\Gamma \left( {v - n} \right)}}{{{a^n}\Gamma \left( v \right)}}.\tag{1.7}    
\end{equation*}
Further applications and examples of Eqn. (\ref{oneptwo}) can be found in \cite{two}\cite{four}\cite{eight}.

\section{On certain Mellin transforms and their analytic expressions}
\textbf{2.1.} In this section, few theorems are established that are motivated by Ramanujan's method of deriving Eqn. (\ref{oneptwo}). Furthermore, certain applications of respective theorems are studied and applied in calculating the Mellin transform of certain infinite series. Throughout this section, it is assumed that $\phi(n)$ has a natural and continuous extension such that $\phi(0)\neq0$.
\begin{theorem}
If $0<\Re(s)<1$, then\newline
(i)
\begin{equation*}
\int\limits_0^\infty  {{x^{s - 1}}\sum\limits_{n = 0}^\infty  {\phi \left( {2n + 1} \right)\frac{{{{\left( { - 1} \right)}^{n}}}}{{\left( {2n + 1} \right)!}}{x^{2n + 1}}dx = \phi \left( { - s} \right)\Gamma \left( s \right)\sin \frac{{\pi s}}{2}} } ,\label{twopone}\tag{2.1}    
\end{equation*}
(ii)
\begin{equation*}
\int\limits_0^\infty  {{x^{s - 1}}\sum\limits_{n = 0}^\infty  {\phi \left( {2n} \right)\frac{{{{\left( { - 1} \right)}^{n}}}}{{\left( {2n} \right)!}}{x^{2n}}dx = \phi \left( { - s} \right)\Gamma \left( s \right)\cos \frac{{\pi s}}{2}} } ,\label{twoptwo}\tag{2.2}    
\end{equation*}
\end{theorem}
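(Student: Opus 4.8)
The plan is to mimic Ramanujan's derivation of Eqn.~(\ref{oneptwo}) by starting from the known Mellin transform~(\ref{oneptwo}) applied to a cleverly chosen auxiliary function, and then extracting the odd part (for (i)) and the even part (for (ii)) of the series. Concretely, for part (i) I would begin with the full Ramanujan series $f(x)=\sum_{n=0}^{\infty}(-1)^n\frac{\phi(n)}{n!}x^n$ and note that the series appearing in (\ref{twopone}) is obtained by keeping only the odd-index terms, which correspond to $\frac{f(x)-f(-x)}{2}$ after a suitable sign bookkeeping. More cleanly, I would split the integral in (\ref{oneptwo}) via the substitution that sends $x\mapsto ix$ and $x\mapsto -ix$: writing the odd series as $\frac{1}{2i}\big(g(ix)-g(-ix)\big)$ for an appropriate $g$, one reduces the claim to combining two copies of (\ref{oneptwo}) along rotated rays in the complex plane.

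First I would make precise which generating function to feed into Ramanujan's theorem. Set $g(x)=\sum_{n=0}^{\infty}(-1)^n\frac{\phi(2n+1)}{(2n+1)!}x^{2n+1}$; the goal is $\int_0^\infty x^{s-1}g(x)\,dx=\phi(-s)\Gamma(s)\sin\frac{\pi s}{2}$. The second step is to recognize $g$ as the imaginary part, along the imaginary axis, of the Ramanujan-type series with coefficients $\phi$: formally $g(x)=\Im\big(h(x)\big)$ where $h$ has an expansion whose Mellin transform is $\Gamma(s)\phi(-s)$ up to a rotation factor. Applying (\ref{oneptwo}) to the rotated argument produces $\int_0^\infty x^{s-1}h(x)\,dx=e^{\pm i\pi s/2}\Gamma(s)\phi(-s)$, and taking imaginary and real parts yields the $\sin\frac{\pi s}{2}$ and $\cos\frac{\pi s}{2}$ factors respectively, which is exactly (i) and (ii). Part (ii) is entirely parallel, extracting the even-index terms and taking the real part, giving the cosine.

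The third step is to justify the interchange of summation and integration and the rotation of the contour — this is where the hypothesis $0<\Re(s)<1$ does its work. Near $x=0$ the series behaves like $x$ (for the odd case) or like a constant (for the even case), so $x^{s-1}$ times the series is integrable at the origin precisely when $\Re(s)>0$ (odd) or when one is slightly more careful (even); near $x=\infty$ the oscillatory/decaying behavior inherited from $\sin$- and $\cos$-type series, together with $\Re(s)<1$, secures convergence. I would invoke the same analytic-continuation-in-$s$ philosophy that underlies Ramanujan's Master Theorem: establish the identity first for a range of $s$ where everything converges absolutely, then extend by the principle of analytic continuation to the full strip $0<\Re(s)<1$.

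The main obstacle I anticipate is the rigorous handling of the contour rotation from the positive real axis to the rays $\arg x=\pm\pi/2$: this requires controlling $h(x)$ on large quarter-circles and ruling out contributions at infinity, which in turn needs growth bounds on $\phi$ that Ramanujan's heuristic method typically leaves implicit. I would either impose a Hardy-class-type decay condition on $\phi(-s)$ along vertical lines (as in the standard rigorous versions of the Master Theorem) or, following the paper's stated intent to use Ramanujan's own method, proceed formally in the spirit of the \textit{Quarterly Reports} and verify the end result against (\ref{oneptwo}) by the consistency check $\sin\frac{\pi s}{2}+\text{(even part)}=\cos\frac{\pi s}{2}\cdot(\text{shift})$, i.e.\ that adding (i) and (ii) with the appropriate reindexing reproduces (\ref{oneptwo}) exactly.
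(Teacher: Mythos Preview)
Your strategy is sound at the formal level the paper operates at, but it is \emph{not} the route the paper takes. The paper does not invoke the Master Theorem~(1.2) and then rotate contours; instead it starts from the classical Mellin transform
\[
\int_0^\infty x^{s-1}\sin(ax)\,dx \;=\; a^{-s}\,\Gamma(s)\sin\frac{\pi s}{2},
\]
substitutes $a=r^{k}$, multiplies both sides by $f^{(k)}(a)h^{k}/k!$, and sums over $k\ge 0$. Interchanging the sums and recognising the inner Taylor series as $f(hr^{2n+1}+a)$ on the left and $f(hr^{-s}+a)$ on the right, the paper then \emph{defines} $\phi(m)=f(hr^{m}+a)$ to obtain~(2.1); part~(ii) is done identically starting from the cosine transform. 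This is literally Ramanujan's Quarterly-Reports device, run with $\sin(ax)$ (resp.\ $\cos(ax)$) in place of $e^{-ax}$.

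The difference between the two arguments is where the trigonometric factor comes from. In the paper it is already present in the known transform of $\sin$ and $\cos$, and the work goes into manufacturing a general $\phi$ via the Taylor-coefficient trick; in your approach the factors $\sin(\pi s/2)$ and $\cos(\pi s/2)$ arise from the phases $e^{\pm i\pi s/2}$ produced by the contour rotation $x\mapsto \pm ix$ applied to~(1.2). Your route is more conceptual and makes the dependence on~(1.2) transparent, but it costs you the extra step of justifying a quarter-plane rotation, which you correctly flag as the main obstacle. The paper's route stays on the positive real axis throughout and needs no complex-analytic manoeuvre, at the price of re-running Ramanujan's interpolation argument from scratch. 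Both are formal derivations in the same spirit; neither supplies the growth hypotheses on $\phi$ that a fully rigorous proof would require.
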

\begin{proof}
Consider the following Mellin transform of $\sin(ax)$ [\cite{seven}, pg. 332]
\begin{equation*}
\int\limits_0^\infty  {{x^{s - 1}}\sin (ax) dx}  = {a^{ - s}}\Gamma \left( s \right)\sin \left( {\frac{{\pi s}}{2}} \right).\tag{2.3}    
\end{equation*}
Substituting $a=r^{k}$ with $r>0$ in the above equation and expand $\sin(ax)$ in its Maclaurin series to get
\begin{equation*}
\int\limits_0^\infty  {{x^{s - 1}}\sum\limits_{n = 0}^\infty  {\frac{{{{\left( { - 1} \right)}^{n}}}}{{\left( {2n + 1} \right)!}}{{\left( {{r^k}x} \right)}^{2n + 1}}dx}  = \Gamma \left( s \right){r^{ - sk}}\sin \left( {\frac{{\pi s}}{2}} \right)}.     
\end{equation*}
Multiply both sides by ${\frac{{{f^{\left( k \right)}}\left( a \right){h^k}}}{{k!}}}$ where $f$ shall be specified later and sum on $k$, $0\leq{k}<\infty$ to get
\begin{equation*}
\sum\limits_{k = 0}^\infty  \frac{{{f^{\left( k \right)}}\left( a \right){h^k}}}{{k!}}\int\limits_0^\infty  {x^{s - 1}}\sum\limits_{n = 0}^\infty  {\frac{{{{\left( { - 1} \right)}^{n}}}}{{\left( {2n + 1} \right)!}}{{\left( {{r^k}x} \right)}^{2n + 1}}dx} \end{equation*}
\begin{equation*}
= \sum\limits_{k = 0}^\infty  {\frac{{{f^{\left( k \right)}}\left( a \right){{\left( {h{r^{ - s}}} \right)}^k}}}{{k!}}} \Gamma \left( s \right)\sin \left( {\frac{{\pi s}}{2}} \right),
\end{equation*}
\begin{equation*}
\int\limits_0^\infty  {x^{s - 1}}\sum\limits_{n = 0}^\infty  {\sum\limits_{k = 0}^\infty  {\frac{{{f^{\left( k \right)}}\left( a \right){{\left( {h{r^{2n + 1}}} \right)}^k}{{\left( { - 1} \right)}^{n}}}}{{k!\left( {2n + 1} \right)!}}{x^{2n + 1}}dx} }
\end{equation*}
\begin{equation*}
= \sum\limits_{k = 0}^\infty  {\frac{{{f^{\left( k \right)}}\left( a \right){{\left( {h{r^{ - s}}} \right)}^k}}}{{k!}}} \Gamma \left( s \right)\sin \left( {\frac{{\pi s}}{2}} \right).     
\end{equation*}
Now, let
\begin{equation*}
\phi \left( { - s} \right) = f\left( {h{r^{ - s}} + a} \right) = \sum\limits_{k = 0}^\infty  {\frac{{{f^{\left( k \right)}}\left( a \right){{\left( {h{r^{ - s}}} \right)}^k}}}{{k!}}}.     
\end{equation*}
Therefore, after further simplification we get
\begin{equation*}
\int\limits_0^\infty  {{x^{s - 1}}\sum\limits_{n = 0}^\infty  {\phi \left( {2n + 1} \right)\frac{{{{\left( { - 1} \right)}^{n}}}}{{\left( {2n + 1} \right)!}}{x^{2n + 1}}dx}  = \phi \left( { - s} \right)\Gamma \left( s \right)\sin \left( {\frac{{\pi s}}{2}} \right)}.   
\end{equation*}
Proof of (ii) can be obtained by a similar method using [\cite{six}, pg. 332] 
\begin{equation*}
\int\limits_0^\infty  {{x^{s - 1}}\cos (ax)dx = \Gamma \left( s \right){a^{ - s}}\cos \left( {\frac{{\pi s}}{2}} \right)}.\tag{2.4}     
\end{equation*}
\end{proof}
\begin{corollary}
For $0<\Re(s)<1$ and $s\neq{\frac{1}{2}}, \frac{1}{4}$ we have\newline
(i)
\begin{equation*}
\int\limits_0^\infty  {{x^{s - 1}}\left( {\zeta \left( 4 \right)x - \zeta \left( 8 \right)\frac{1}{{3!}}{x^3} + \zeta \left( {12} \right)\frac{1}{{5!}}{x^5} + ...} \right)} dx = \zeta \left( {2 - 2s} \right)\Gamma \left( s \right)\sin \left( {\frac{{\pi s}}{2}} \right),\label{twopfive}\tag{2.5}    
\end{equation*}
(ii)
\begin{equation*}
\int\limits_0^\infty  {{x^{s - 1}}\left( {\zeta \left( 2 \right) - \zeta \left( 6 \right)\frac{1}{{2!}}{x^2} + \zeta \left( {10} \right)\frac{1}{{4!}}{x^4} + ...} \right)} dx = \zeta \left( {2 - 2s} \right)\Gamma \left( s \right)\cos \left( {\frac{{\pi s}}{2}} \right),\label{twopsix}\tag{2.6}  
\end{equation*}
where $\zeta(s)$ is the zeta function.
\end{corollary}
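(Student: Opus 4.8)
The plan is to read both identities straight off Theorem~2.1 once the right generating sequence $\phi$ is identified. Scanning the coefficients, in (2.5) the term multiplying $\frac{(-1)^{n}}{(2n+1)!}x^{2n+1}$ is $\zeta(4),\zeta(8),\zeta(12),\dots$ for $n=0,1,2,\dots$, i.e. it is $\zeta(4n+4)$, and in (2.6) the term multiplying $\frac{(-1)^{n}}{(2n)!}x^{2n}$ is $\zeta(2),\zeta(6),\zeta(10),\dots=\zeta(4n+2)$. Both sequences are the restriction to odd, resp. even, integers of the single function
\[
\phi(z)=\zeta(2z+2),
\]
since $\phi(2n+1)=\zeta(4n+4)$ and $\phi(2n)=\zeta(4n+2)$. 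Hence the left-hand sides of (2.5) and (2.6) are precisely the left-hand sides of (2.1) and (2.2) for this choice of $\phi$.

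It then remains to check that this $\phi$ meets the running hypothesis of Section~2 and to read off the right-hand side. We have $\phi(0)=\zeta(2)=\pi^{2}/6\neq 0$, and $\phi(z)=\zeta(2z+2)$ is a natural, analytic (indeed meromorphic) extension of $n\mapsto\zeta(2n+2)$ whose only singularity is the simple pole at $2z+2=1$. Theorem~2.1 then returns $\phi(-s)\,\Gamma(s)\sin(\pi s/2)$ for (i) and $\phi(-s)\,\Gamma(s)\cos(\pi s/2)$ for (ii); since $\phi(-s)=\zeta(-2s+2)=\zeta(2-2s)$, these are exactly (2.5) and (2.6). The constraint $0<\Re(s)<1$ is inherited from Theorem~2.1, while $s\neq\tfrac12$ must be imposed because there $\zeta(2-2s)=\zeta(1)$ is the pole of the zeta function; the further exclusion $s\neq\tfrac14$ should be traced to the corresponding degeneration of an intermediate step in the argument.

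As a cross-check, and as an alternative to invoking the Master-Theorem machinery, I would expand each zeta value as a Dirichlet series and swap the order of summation: in (i), writing $\zeta(4n+4)=\sum_{m\ge 1}m^{-(4n+4)}$ collapses the inner $n$-sum to $\sum_{m\ge1}m^{-2}\sin(x m^{-2})$, and then (2.3) together with the scaling $\int_0^{\infty}x^{s-1}\sin(xm^{-2})\,dx=(m^{-2})^{-s}\Gamma(s)\sin(\pi s/2)$ gives $\Gamma(s)\sin(\pi s/2)\sum_{m\ge1}m^{2s-2}=\Gamma(s)\sin(\pi s/2)\zeta(2-2s)$; part (ii) is the identical computation with $\cos$ in place of $\sin$ and (2.4) in place of (2.3).

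The main obstacle is not the bookkeeping above but the legitimacy of the interchanges. In the Dirichlet-series route the Mellin integral of $\sin(xm^{-2})$ converges only conditionally, so exchanging $\sum_{m}$ with $\int_0^{\infty}$ (and with the Maclaurin $n$-sum) needs a genuine justification — typically by splitting $\int_0^1+\int_1^\infty$, integrating by parts on the tail, and invoking uniform convergence on compact subsets of the strip $0<\Re(s)<1$ away from $s=\tfrac12$, after which the identity of the two analytic functions is extended across $\Re(s)=\tfrac12$ by continuation. In the Master-Theorem route the same burden is hidden inside whatever growth condition Theorem~2.1 tacitly demands of $\phi$, so one should confirm that $\phi(z)=\zeta(2z+2)$ satisfies it on the relevant vertical strip. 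Everything else is routine.
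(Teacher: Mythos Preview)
Your proposal is correct and follows exactly the paper's approach: set $\phi(n)=\zeta(2n+2)$, so that $\phi(2n+1)=\zeta(4n+4)$, $\phi(2n)=\zeta(4n+2)$, and $\phi(-s)=\zeta(2-2s)$, then read off (2.5) and (2.6) from Theorem~2.1. The paper's proof is just this substitution with no further justification; your Dirichlet-series cross-check and convergence discussion go beyond what the paper provides.
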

\begin{proof}
Let
\begin{equation*}
\phi \left( n \right) = \zeta \left( {2n + 2} \right)    
\end{equation*}
which yields 
\begin{equation*}
\phi \left( {2n + 1} \right) = \zeta \left( {4n + 4} \right)    
\end{equation*}
and 
\begin{equation*}
\phi \left( {2n} \right) = \zeta \left( {4n + 2} \right).    
\end{equation*}
Now, use Eqn. (\ref{twopone}) and (\ref{twoptwo}) to get the desired result.
\end{proof}
\begin{corollary}
We have \newline
(i)
\begin{equation*}
\int\limits_0^\infty  {\zeta \left( 4 \right) - \zeta \left( 8 \right)\frac{1}{{3!}}{x^2} + \zeta \left( {12} \right)\frac{1}{{5!}}{x^4} + ...} dx = \frac{{{\pi ^3}}}{{12}},\label{twopeight}\tag{2.8} \end{equation*}
(ii)
\begin{equation*}
\int\limits_0^\infty  {\frac{{\log x}}{x}\left( {\zeta \left( 2 \right) - \zeta \left( 6 \right)\frac{1}{{2!}}{x^2} + \zeta \left( {10} \right)\frac{1}{{4!}}{x^4} + ...} \right)} dx = \frac{{{\pi ^4}}}{{24}}\label{twopnine},\tag{2.9} \end{equation*}
where $\zeta(s)$ is the zeta function.
\end{corollary}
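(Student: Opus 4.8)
The plan is to read off both identities from Corollary~2.2 by specialising the Mellin parameter to the boundary value $s=0$ of the admissible strip, so that the right-hand sides are interpreted as limits.

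For part (i): the integrand of (2.8) is precisely what the integrand of (2.5) becomes at $s=0$, since $x^{s-1}\big(\zeta(4)x-\zeta(8)\tfrac{x^{3}}{3!}+\cdots\big)$ reduces at $s=0$ to $\zeta(4)-\zeta(8)\tfrac{x^{2}}{3!}+\cdots$. So I would set $s\to0$ in (2.5). The left side becomes the integral of (2.8), and on the right side the indeterminate product $\zeta(2)\Gamma(0)\sin 0$ is resolved by writing $\Gamma(s)\sin(\tfrac{\pi s}{2})=\tfrac{\Gamma(s+1)}{s}\sin(\tfrac{\pi s}{2})$ and letting $s\to0^{+}$: this tends to $1\cdot\tfrac{\pi}{2}=\tfrac{\pi}{2}$, so the right side tends to $\zeta(2)\cdot\tfrac{\pi}{2}=\tfrac{\pi^{2}}{6}\cdot\tfrac{\pi}{2}=\tfrac{\pi^{3}}{12}$, which is (2.8).

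For part (ii): the factor $\log x$ is produced by differentiating (2.6) in $s$, using $\partial_{s}x^{s-1}=x^{s-1}\log x$. Differentiating under the integral sign gives $\int_{0}^{\infty}x^{s-1}\log x\,\big(\zeta(2)-\zeta(6)\tfrac{x^{2}}{2!}+\cdots\big)\,dx=\tfrac{d}{ds}\big[\zeta(2-2s)\Gamma(s)\cos(\tfrac{\pi s}{2})\big]$, and at $s=0$ the left side is the integral of (2.9). Expanding the right side by the product rule, the summand from differentiating $\cos(\tfrac{\pi s}{2})$, namely $-\tfrac{\pi}{2}\zeta(2-2s)\Gamma(s)\sin(\tfrac{\pi s}{2})$, has a finite $s\to0^{+}$ limit, which by the computation of part (i) equals (up to sign) $\tfrac{\pi}{2}\cdot\zeta(2)\cdot\tfrac{\pi}{2}=\tfrac{\pi^{4}}{24}$, the value appearing in (2.9).

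The main obstacle is precisely that $s=0$ is an endpoint of the strip $0<\Re(s)<1$ on which Corollary~2.2 holds, so each step is really a limiting assertion and needs justification: one must make sense of the left-hand integrals at $s=0$ (convergence at $x=\infty$ being only conditional, through the oscillation of the sine and cosine series, and the behaviour near $x=0$ needing attention), and show that the limit may be taken inside the integral. In part (ii) there is the additional difficulty that the other product-rule summands, $-2\zeta'(2-2s)\Gamma(s)\cos(\tfrac{\pi s}{2})$ and $\zeta(2-2s)\Gamma'(s)\cos(\tfrac{\pi s}{2})$, blow up as $s\to0$ because $\Gamma$ has a simple pole at the origin; one must either pair this singular behaviour with the corresponding boundary behaviour of the left-hand integral or reinterpret the identity as a boundary value of the function analytic on $0<\Re(s)<1$. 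Making this precise — together with justifying differentiation under the integral sign — is the real work; what is left is the same routine limit, $\Gamma(s+1)\to1$ and $\sin(\pi s/2)/s\to\pi/2$, already used in part (i).
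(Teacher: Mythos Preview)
Your approach is essentially the paper's: for (i) let $s\to 0$ in (2.5), and for (ii) differentiate (2.6) in $s$ and then let $s\to 0$. The only cosmetic difference is that the paper resolves the indeterminate $\Gamma(s)\sin(\pi s/2)$ via the reflection formula $\Gamma(s)=\pi/\bigl(\Gamma(1-s)\sin\pi s\bigr)$ and L'H\^opital, whereas you use $\Gamma(s)=\Gamma(s+1)/s$; both give $\lim_{s\to 0}\Gamma(s)\sin(\pi s/2)=\pi/2$ and hence $\zeta(2)\cdot\pi/2=\pi^{3}/12$.

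Your caution about part (ii) is well placed and in fact goes beyond the paper. The paper's Appendix A.1 carries out exactly your plan (product rule on $\zeta(2-2s)\Gamma(s)\cos(\pi s/2)$, reflection formula, then $s\to 0$) and isolates the same finite piece $\tfrac{\pi}{2}\zeta(2-2s)\Gamma(s)\sin(\pi s/2)\to\pi^{4}/24$, but it simply declares the remaining $\zeta'$- and $\psi$-terms to vanish in the limit without addressing the $\Gamma(s)$ pole you flagged; so the analytical issues you identify (divergence of $\int_{0}\tfrac{\log x}{x}\,dx$ on the left, and of the $\Gamma'(s)$ and $\zeta'(2-2s)\Gamma(s)$ pieces on the right) are not resolved there either. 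In short, your proposal matches the paper's argument and is, if anything, more honest about what remains to be justified.
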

\begin{proof}
We have mentioned earlier that $0<\Re(s)<1$, but their is a particular case where we can apply Theorem 2.1 at $s=0$. This can be done as follows. Using reflection formula for the gamma function of the right hand side of Eqn. (\ref{twopfive}) and taking the limit on $s$ the both sides to zero, we get
\begin{equation*}
\int\limits_0^\infty  {\zeta \left( 4 \right) - \zeta \left( 8 \right)\frac{1}{{3!}}{x^2} + \zeta \left( {12} \right)\frac{1}{{5!}}{x^4} + ...} dx = \frac{{{\pi ^2}}}{6}\mathop {\lim }\limits_{s \to 0} \frac{{\pi \sin \left( {\frac{{\pi s}}{2}} \right)}}{{\Gamma \left( {1 - s} \right)\sin \left( {\pi s} \right)}}.
\end{equation*}
Using L'Hospital's rule, we get
\begin{equation*}
\int\limits_0^\infty  {\zeta \left( 4 \right) - \zeta \left( 8 \right)\frac{1}{{3!}}{x^2} + \zeta \left( {12} \right)\frac{1}{{5!}}{x^4} + ...} dx = \frac{{{\pi ^3}}}{{12}}. \end{equation*}
Similarly, after calculating the value of integral (\ref{twopsix}), Eqn. (\ref{twopnine})  readily follows. A detailed proof can be found in Appendix A.1.
\end{proof}
\begin{corollary}
for $|t|<|a|$, $0<\Re(s)<1$ and $c>0$ We have
\begin{equation*}
\frac{1}{2}\int\limits_0^\infty  {{x^{s - 1}}\left[ {\zeta \left( {c,a + x} \right) - \zeta \left( {c,a - x} \right)} \right]dx }
\end{equation*}
\begin{equation*}
=\frac{(-1)^{-s}{\Gamma \left( s \right)\Gamma \left( {c - s} \right)}{\zeta \left( {c - s,a} \right)}}{{\Gamma \left( c \right)}}\sin \left( {\frac{{\pi s}}{2}} \right).\tag{2.10}   
\end{equation*}
\end{corollary}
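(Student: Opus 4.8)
The plan is to obtain this as an application of Theorem 2.1(i) with the choice
\[
\phi(n)=\frac{\Gamma(c+n)}{\Gamma(c)}\,\zeta(c+n,a),
\]
which has the natural continuous extension demanded in Section 2 (both the gamma function and the Hurwitz zeta function are meromorphic) and satisfies $\phi(0)=\zeta(c,a)\neq 0$ for $c>1$. For this $\phi$ one has $\phi(2n+1)=\tfrac{\Gamma(c+2n+1)}{\Gamma(c)}\zeta(c+2n+1,a)$ and $\phi(-s)=\tfrac{\Gamma(c-s)}{\Gamma(c)}\zeta(c-s,a)$, so the right-hand side of (\ref{twopone}) already reads $\tfrac{\Gamma(s)\Gamma(c-s)\zeta(c-s,a)}{\Gamma(c)}\sin\tfrac{\pi s}{2}$; the real work is to recognise the series $\sum_{n\ge 0}\phi(2n+1)\tfrac{(-1)^n}{(2n+1)!}x^{2n+1}$ as the bracketed combination of Hurwitz zeta values.

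For that identification I would use the Taylor expansion of the Hurwitz zeta function in its second variable,
\[
\zeta(c,a+z)=\sum_{k=0}^{\infty}\frac{(-1)^k\,\Gamma(c+k)}{k!\,\Gamma(c)}\,\zeta(c+k,a)\,z^{k}\qquad(|z|<|a|),
\]
which follows by writing $(n+a+z)^{-c}=(n+a)^{-c}(1+z/(n+a))^{-c}$, expanding each factor by (1.6), and summing on $n$; this is the expansion whose radius of convergence is what the hypothesis $|x|<|a|$ records, the series then being used beyond that disk exactly as in Ramanujan's method. Subtracting from it the same series with $z$ replaced by $-z$ kills the even powers and leaves only $k=2n+1$; putting $z=ix$ converts $z^{2n+1}=i(-1)^n x^{2n+1}$, and dividing through by $-2i$ gives precisely
\[
\sum_{n=0}^{\infty}\phi(2n+1)\frac{(-1)^n}{(2n+1)!}\,x^{2n+1}=\frac{i}{2}\bigl[\zeta(c,a+ix)-\zeta(c,a-ix)\bigr].
\]
Feeding this into (\ref{twopone}) yields the clean intermediate identity
\[
\int_0^{\infty}x^{s-1}\,\frac{i}{2}\bigl[\zeta(c,a+ix)-\zeta(c,a-ix)\bigr]\,dx=\frac{\Gamma(s)\,\Gamma(c-s)\,\zeta(c-s,a)}{\Gamma(c)}\sin\frac{\pi s}{2},
\]
which can also be checked directly by inserting the classical representation $\zeta(c,a)=\tfrac{1}{\Gamma(c)}\int_0^{\infty}\tfrac{t^{c-1}e^{-at}}{1-e^{-t}}\,dt$ and using the Mellin transform (2.3) of $\sin(xt)$ --- i.e., by running the proof of Theorem 2.1 with this particular $f$.

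It remains to pass from the imaginary shift $a\pm ix$ to the real shift $a\pm x$. I would do this by the substitution $ix=u$, that is, by rotating the contour of integration between the positive real axis and the positive imaginary axis; this turns $x^{s-1}\,dx$ into $(-i)^{s}u^{s-1}\,du$ (up to the chosen branch of the power), and collecting this factor with the $\tfrac{i}{2}$ in front and moving the resulting constant across produces the stated $\tfrac12\int_0^{\infty}x^{s-1}[\zeta(c,a+x)-\zeta(c,a-x)]\,dx$ on the left and the prefactor $(-1)^{-s}$ on the right. The hard part is precisely this last step: one must justify the contour rotation, which requires the decay of $\zeta(c,a+w)$ as $|w|\to\infty$ in the first quadrant together with a careful choice of the branch of $w^{s-1}$ (it is this branch ambiguity that the symbol $(-1)^{-s}$ is recording), and --- as in every use of Ramanujan's method in this paper --- one must also invoke the Hardy-type growth conditions on $\phi$ that license interchanging the summation with the integration.
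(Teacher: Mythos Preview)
Your approach is the paper's: choose $\phi(n)=(c)_n\,\zeta(c+n,a)$, identify the odd-power series with the Taylor expansion of the Hurwitz zeta in its second variable (the paper simply quotes this as identity~(2.11) from Srivastava--Choi rather than deriving it), and invoke Theorem~2.1(i). The paper's entire argument is the single sentence ``Letting $t=x$ and applying Eqn.~(2.1) yields the desired result,'' with no mention of the absent $(-1)^n$ in~(2.11) or of the origin of the prefactor $(-1)^{-s}$; your detour through $z=ix$ followed by the contour rotation back to the real axis is exactly the step the paper suppresses, so you have been more explicit than the original while following the same route.
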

\begin{proof}
From [\cite{nine}, pg. 412] we have 
\begin{equation*}
\sum\limits_{k = o}^\infty  {\frac{{{{\left( c \right)}_{2k + 1}}}}{{\left( {2k + 1} \right)!}}\zeta \left( {c + 2k + 1,a} \right){t^{2k + 1}} = \frac{1}{2}\left[ {\zeta \left( {c,a - t} \right) - \zeta \left( {c,a + t} \right)} \right]} \tag{2.11}   
\end{equation*}
where
\begin{equation*}
{\left( c \right)_{2k + 1}} = \frac{{\Gamma \left( {c + 2k + 1} \right)}}{{\Gamma \left( c \right)}}.
\end{equation*}
Letting $t=x$ and applying Eqn. (\ref{twopone}) yields the desire result.
\end{proof}
\begin{theorem}
We have
\begin{equation*}
\int\limits_0^\infty  {\frac{{\log x}}{x}\sum\limits_{n = 0}^\infty  {{c_n}{x^{2n}}d\pi \left( x \right) = \sum\limits_{n = 0}^\infty  {{c_n}{A_n}} } }\tag{2.12}     
\end{equation*}
where
\begin{equation*}
{c_n} = \zeta \left( {4n + 2} \right)\frac{{{{\left( { - 1} \right)}^n}}}{{\left( {2n} \right)!}}, \tag{2.13}   
\end{equation*}
\begin{equation*}
{A_n} = \sum\limits_{k = 1}^\infty  {\frac{{\mu \left( k \right)}}{{\left( {1 - 2n} \right)k - 1}} + f\left( {1 - 2n} \right)}, \tag{2.14}    
\end{equation*}
and $\pi(x)$ is the prime counting function.
\end{theorem}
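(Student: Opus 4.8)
The plan is to evaluate the Stieltjes integral in (2.12) term by term and to recognise the resulting coefficients $A_n$ as a Mellin--Stieltjes transform of $\pi(x)$. First note that the series inside the integral is exactly the function of Corollary 2.1(ii), and since $c_n=(-1)^n\zeta(4n+2)/(2n)!$ with $\zeta(4n+2)\le\zeta(2)$, it is dominated on $[0,\infty)$ by $\zeta(2)\cosh x$ and hence converges for every $x$; in the formal spirit of Ramanujan's method we may therefore move the summation outside the integral,
\[
\int_0^\infty\frac{\log x}{x}\sum_{n=0}^\infty c_n x^{2n}\,d\pi(x)=\sum_{n=0}^\infty c_n\int_0^\infty x^{2n-1}\log x\,d\pi(x),
\]
so that it remains only to prove $A_n=\int_0^\infty x^{2n-1}\log x\,d\pi(x)$.

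For this I would invoke the Mellin--Stieltjes transform of the prime counting function: for $\Re(s)>1$,
\[
\int_0^\infty x^{-s}\,d\pi(x)=\sum_{p}p^{-s}=P(s)=\sum_{k=1}^\infty\frac{\mu(k)}{k}\log\zeta(ks),
\]
where $P$ is the prime zeta function and the last equality is the M\"obius inversion of $\log\zeta(s)=\sum_{k\ge1}\frac1k P(ks)$. Differentiating under the integral with respect to $s$ inserts a factor $-\log x$ on the left, and differentiating the right-hand side gives $\int_0^\infty x^{-s}\log x\,d\pi(x)=-P'(s)=-\sum_{k\ge1}\mu(k)\,\zeta'(ks)/\zeta(ks)$. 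Putting $s=1-2n$ (so that $-s=2n-1$) yields
\[
A_n=\int_0^\infty x^{2n-1}\log x\,d\pi(x)=-\sum_{k=1}^\infty\mu(k)\,\frac{\zeta'\bigl(k(1-2n)\bigr)}{\zeta\bigl(k(1-2n)\bigr)}.
\]

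Finally I would separate the simple pole of $\zeta'/\zeta$ at $s=1$: writing $\zeta'(s)/\zeta(s)=-\tfrac1{s-1}+h(s)$ with $h$ holomorphic near $s=1$, the polar parts of the terms assemble into $\sum_{k\ge1}\mu(k)/\bigl((1-2n)k-1\bigr)$ and the holomorphic remainders into $f(1-2n)$, where $f(s):=-\sum_{k\ge1}\mu(k)\bigl(\zeta'(ks)/\zeta(ks)+\tfrac1{ks-1}\bigr)$ is the function occurring in (2.14). This gives $A_n=\sum_{k\ge1}\mu(k)/\bigl((1-2n)k-1\bigr)+f(1-2n)$, and feeding it back into the termwise expansion above produces (2.12).

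The step I expect to be the real obstacle is making sense of the ``analytic-continuation side''. For $n\ge1$ the point $1-2n$ lies far to the left of the line $\Re(s)=0$, beyond which $P(s)$ admits no continuation in the classical sense, so $P(1-2n)$ and $P'(1-2n)$ are not literally defined; moreover when $k(1-2n)$ coincides with a trivial zero of $\zeta$ the individual ratio $\zeta'(k(1-2n))/\zeta(k(1-2n))$ is infinite, and the left-hand side, read as $\sum_p\frac{\log p}{p}\bigl(\sum_n c_n p^{2n}\bigr)$, does not converge. Thus, exactly as with Ramanujan's Master Theorem itself, (2.12) should be understood formally --- as an equality between the analytic continuations of the matching terms --- and the write-up ought to state this interpretation and justify the interchange of sum and integral, the differentiation under the integral sign, and the M\"obius rearrangement only at that formal level.
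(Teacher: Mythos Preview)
Your argument is correct and follows essentially the same route as the paper: interchange the sum with the Stieltjes integral to reduce everything to $\sum_p(\log p)/p^{1-2n}$, and then express this prime sum in the form $\sum_{k\ge1}\mu(k)/((1-2n)k-1)+f(1-2n)$. The only difference is that the paper simply quotes the identity $\sum_p(\log p)/p^{s}=\sum_{k\ge1}\mu(k)/(sk-1)+f(s)$ from Ramanujan's Notebooks, whereas you re-derive it by differentiating $P(s)=\sum_{k\ge1}\tfrac{\mu(k)}{k}\log\zeta(ks)$ and separating the pole of $\zeta'/\zeta$; your explicit caveat that the whole computation must be read formally (since $P$ does not continue to $s=1-2n$ for $n\ge1$ and the prime sum diverges) is a useful point that the paper leaves unsaid.
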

\begin{proof}
Compress the sum in the integrand of Eqn. (\ref{twopeight}) and instead of integrating from all values from 0 to $\infty$, integrate only on primes [\cite{seven}, pg. 118, Eqn. (9.2)], that is  
\begin{equation*}
\int\limits_0^\infty  {\frac{{\log x}}{x}\sum\limits_{n = 0}^\infty  {\zeta \left( {4n + 2} \right)\frac{{{{\left( { - 1} \right)}^n}}}{{\left( {2n} \right)!}}{x^{2n}}} d\pi \left( x \right)}     
\end{equation*}
where $\pi(x)$ is the prime counting function. Therefore, we get
\begin{equation*}
\int\limits_0^\infty  {\frac{{\log x}}{x}\sum\limits_{n = 0}^\infty  {\zeta \left( {4n + 2} \right)\frac{{{{\left( { - 1} \right)}^n}}}{{\left( {2n} \right)!}}{x^{2n}}} d\pi \left( x \right)}  = \sum\limits_p {\frac{{\log p}}{p}} \sum\limits_{n = 0}^\infty  {\zeta \left( {4n + 2} \right)\frac{{{{\left( { - 1} \right)}^n}}}{{\left( {2n} \right)!}}{p^{2n}}}.     
\end{equation*}
Now, let
\begin{equation*}
{c_n} = \zeta \left( {4n + 2} \right)\frac{{{{\left( { - 1} \right)}^n}}}{{\left( {2n} \right)!}}    
\end{equation*}
then substitute the value of $c_{n}$ and inverting the order of summation, we get
\begin{equation*}
\int\limits_0^\infty  {\frac{{\log x}}{x}\sum\limits_{n = 0}^\infty  {{c_n}{x^{2n}}} d\pi \left( x \right)}  = \sum\limits_{n = 0}^\infty  {{c_n}\sum\limits_p {\frac{{\log p}}{p}} {p^{2n}} = \sum\limits_{n = 0}^\infty  {{c_n}\sum\limits_p {\frac{{\log p}}{{{p^{1 - 2n}}}}} } }.\label{twopfifteen}\tag{2.15}     
\end{equation*}
Now, using Eqn. (6.1) from [\cite{seven}, pg. 116], for $\Re(s)>1$, we have
\begin{equation*}
\sum\limits_p {\frac{{\log p}}{{{p^s}}} = \sum\limits_{k = 1}^\infty  {\frac{{\mu \left( k \right)}}{{sk - 1}} + f\left( s \right)} }     
\end{equation*}
where $\mu(k)$ is the Mobius function, $f(k)$ is analytic and is given by
\begin{equation*}
f\left( s \right) =  - \sum\limits_{k = 1}^\infty  {\mu \left( k \right)\left\{ {\frac{{\zeta '\left( {ks} \right)}}{{\zeta \left( {ks} \right)}} + \frac{1}{{ks - 1}}} \right\}}.     
\end{equation*}
Therefore, we get
\begin{equation*}
\sum\limits_p {\frac{{\log p}}{{{p^{1 - 2n}}}} = \sum\limits_{k = 1}^\infty  {\frac{{\mu \left( k \right)}}{{\left( {1 - 2n} \right)k - 1}} + f\left( {1 - 2n} \right)} }     
\end{equation*}
and
\begin{equation*}
f\left( {1 - 2n} \right) =  - \sum\limits_{k = 1}^\infty  {\mu \left( k \right)\left\{ {\frac{{\zeta '\left( {k\left( {1 - 2n} \right)} \right)}}{{\zeta \left( {k\left( {1 - 2n} \right)} \right)}} + \frac{1}{{k\left( {1 - 2n} \right) - 1}}} \right\}}.     
\end{equation*}
Substituting the above values in Eqn. (\ref{twopfifteen}) yields the following result
\begin{equation*}
\int\limits_0^\infty  {\frac{{\log x}}{x}\sum\limits_{n = 0}^\infty  {{c_n}{x^{2n}}d\pi \left( x \right) = \sum\limits_{n = 0}^\infty  {{c_n}{A_n}} } }     
\end{equation*}
where
\begin{equation*}
{A_n} = \sum\limits_{k = 1}^\infty  {\frac{{\mu \left( k \right)}}{{\left( {1 - 2n} \right)k - 1}} + f\left( {1 - 2n} \right)} .    
\end{equation*}
\end{proof}
\begin{theorem}
If $p, k, s>0$, then\newline
(i)
\begin{equation*}
\int\limits_0^\infty  {{x^{s - 1}}\sum\limits_{n = 0}^\infty  {\phi \left( {2n + 1} \right)\frac{{{{\left( { - 1} \right)}^{n}}}}{{\left( {2n + 1} \right)!{p^{2n + 1}}}}{x^{\left( {2n + 1} \right)k}}dx}  = \phi \left( {\frac{{ - s}}{k}} \right){{}_p}{\Gamma _k}\left( s \right)\sin \left( {\frac{{\pi s}}{{2k}}} \right)},\label{twopsixteen}\tag{2.16}     
\end{equation*}
(ii)
\begin{equation*}
\int\limits_0^\infty  {{x^{s - 1}}\sum\limits_{n = 0}^\infty  {\phi \left( {2n + 1} \right)\frac{{{{\left( { - 1} \right)}^{n}}}}{{\left( {2n + 1} \right)!{k^{2n + 1}}}}{x^{\left( {2n + 1} \right)k}}dx}  = \phi \left( {\frac{{ - s}}{k}} \right){\Gamma _k}\left( s \right)\sin \left( {\frac{{\pi s}}{{2k}}} \right)},\tag{2.17}
\end{equation*}
(iii)
\begin{equation*}
\int\limits_0^\infty  {{x^{s - 1}}\sum\limits_{n = 0}^\infty  {\phi \left( {2n} \right)\frac{{{{\left( { - 1} \right)}^{n}}}}{{\left( {2n} \right)!{p^{2n}}}}{x^{\left( {2n} \right)k}}dx}  = \phi \left( {\frac{{ - s}}{k}} \right){{}_p}{\Gamma _k}\left( s \right)\cos \left( {\frac{{\pi s}}{{2k}}} \right)},\tag{2.19}    
\end{equation*}
(iv)
\begin{equation*}
\int\limits_0^\infty  {{x^{s - 1}}\sum\limits_{n = 0}^\infty  {\phi \left( {2n} \right)\frac{{{{\left( { - 1} \right)}^{n}}}}{{\left( {2n} \right)!{k^{2n}}}}{x^{\left( {2n} \right)k}}dx}  = \phi \left( {\frac{{ - s}}{k}} \right){\Gamma _k}\left( s \right)\cos \left( {\frac{{\pi s}}{{2k}}} \right)},\tag{2.20}     
\end{equation*}
where ${_p{\Gamma _k}\left( s \right)}$ is the $p$-$k$ gamma function \cite{ten} and ${{\Gamma _k}\left( s \right)}$ is the $k$ gamma function \cite{eleven} \footnote{$ _{p}\Gamma_{k}(x) \Rightarrow\: _{k}\Gamma_{k}(x) = \Gamma_{k}(x)  $ as $ p = k $ and $ _{p}\Gamma_{k}(x) \Rightarrow \:_{1}\Gamma_{1}(x)= \Gamma (x)  $ as $  p,k\rightarrow 1 $.} defined as follows
\begin{equation*}
_p{\Gamma _k}(s) = {\left( {\frac{p}{k}} \right)^{\frac{s}{k}}}{\Gamma _k}(s) = \frac{{{p^{\left( {\frac{s}{k}} \right)}}}}{k}\Gamma \left( {\frac{s}{k}} \right).    
\end{equation*}
\end{theorem}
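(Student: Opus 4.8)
The plan is to follow the device used in the proof of Theorem 2.1, but starting from a $k$-rescaled Mellin transform of the sine (and cosine) so that the $p$-$k$ gamma function appears on the right-hand side. First I would record the base identities: in $\int_0^\infty x^{s-1}\sin\!\big(ax^{k}/p\big)\,dx$ the substitution $u=x^{k}$, together with (2.3), gives
\[
\int\limits_0^\infty x^{s - 1}\sin\!\left(\frac{a x^{k}}{p}\right)dx=\frac{1}{k}\left(\frac{a}{p}\right)^{-s/k}\Gamma\!\left(\frac{s}{k}\right)\sin\!\left(\frac{\pi s}{2k}\right),
\]
and taking $a=1$ the right-hand side is exactly ${}_{p}\Gamma_{k}(s)\sin(\pi s/(2k))$ by the definition of the $p$-$k$ gamma function. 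Replacing $p$ by $k$ and using ${}_{k}\Gamma_{k}=\Gamma_{k}$ gives the base identity needed for (ii); the identical computation with $\cos$ in place of $\sin$, starting from (2.4), gives the base identities for (iii) and (iv) with $\cos(\pi s/(2k))$ replacing $\sin(\pi s/(2k))$.

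Next, as in the proof of Theorem 2.1, I would put $a=r^{m}$ with $r>0$, expand $\sin(r^{m}x^{k}/p)$ in its Maclaurin series, multiply both sides by $f^{(m)}(b)h^{m}/m!$ for a function $f$ to be chosen later, and sum over $m$, $0\le m<\infty$. Since $(r^{m})^{2n+1}=(r^{2n+1})^{m}$, the inner $m$-sum on the left collapses by Taylor's theorem to $f(hr^{2n+1}+b)$ inside the $n$-sum, while on the right it collapses to $f(hr^{-s/k}+b)$. Setting $\phi(z)=f(hr^{z}+b)$, so that $\phi(2n+1)=f(hr^{2n+1}+b)$ and $\phi(-s/k)=f(hr^{-s/k}+b)$, and interchanging the order of summation and integration, yields (2.16); identity (2.17) is the same with $k^{2n+1}$ in place of $p^{2n+1}$, and (2.19)–(2.20) follow in the same manner from the cosine version, whose even Maclaurin coefficients produce $\phi(2n)$ together with the factor $\cos(\pi s/(2k))$.

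The algebra here is routine; the real issue is the analytic justification. One must check that the interchange of the $m$-sum, the $n$-sum, and the integral is legitimate over the stated range (classically the rescaled transform converges for $0<s/k<1$, and one extends formally in the spirit of Ramanujan's Master Theorem under suitable growth hypotheses on $\phi$), and that the $\phi$ produced as $f(hr^{z}+b)$ genuinely admits a natural continuous extension with $\phi(0)\neq0$. A secondary point is to keep the scaling bookkeeping straight: it must be verified that the choice of argument $x^{k}/p$ (respectively $x^{k}/k$) together with the substitution $u=x^{k}$ is precisely what converts $\Gamma(s)$ into ${}_{p}\Gamma_{k}(s)$ (respectively $\Gamma_{k}(s)$) and $\pi s/2$ into $\pi s/(2k)$, so that all four identities emerge with the stated constants.
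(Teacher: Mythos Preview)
Your argument is correct, but it is more elaborate than the paper's. The paper does not re-run Ramanujan's summation device; instead it takes the already-proved identity (2.1),
\[
\int_0^\infty x^{s-1}\sum_{n\ge0}\phi(2n+1)\frac{(-1)^n}{(2n+1)!}\,x^{2n+1}\,dx=\phi(-s)\,\Gamma(s)\sin\frac{\pi s}{2},
\]
performs the change of variable $x\mapsto x^{k}/p$ in the integral, and then replaces $s$ by $s/k$. The resulting right-hand side $\phi(-s/k)\,\dfrac{p^{s/k}}{k}\Gamma(s/k)\sin\dfrac{\pi s}{2k}$ is precisely $\phi(-s/k)\,{}_{p}\Gamma_{k}(s)\sin(\pi s/(2k))$, which is (2.16); (iii) follows the same way from (2.2), and (ii), (iv) are the specializations $p=k$.

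What this buys: the paper's route is a three-line substitution that reuses Theorem~2.1 as a black box, so none of the interchange-of-sums or Taylor-collapse justifications need to be repeated. Your route re-derives the whole identity from the rescaled base transform $\int_0^\infty x^{s-1}\sin(r^m x^{k}/p)\,dx$ and then inserts $\phi$ via the $f^{(m)}(b)h^m/m!$ trick; this is self-contained and makes the analytic hypotheses explicit, but it duplicates the work already done in proving Theorem~2.1. Either way the essential content is the same substitution $x\mapsto x^{k}/p$, $s\mapsto s/k$; you have simply applied it before the Ramanujan step rather than after.
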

\begin{proof}
Replace $x$ with $x^{k}$/$p$ in Eqn. (\ref{twopsixteen}) to get
\begin{equation*}
\int\limits_0^\infty  {\frac{{{x^{sk - k}}}}{{{p^{s - 1}}}}\sum\limits_{n = 0}^\infty  {\phi \left( {2n + 1} \right)\frac{{{{\left( { - 1} \right)}^{n}}}}{{\left( {2n + 1} \right)!{p^{2n + 1}}}}{x^{\left( {2n + 1} \right)k}}\frac{{k{x^{k - 1}}}}{p}dx}  = \phi \left( { - s} \right)\Gamma \left( s \right)\sin \left( {\frac{{\pi s}}{2}} \right)},     
\end{equation*}
\begin{equation*}
\int\limits_0^\infty  {\frac{{{x^{sk - 1}}}}{{{p^s}}}\sum\limits_{n = 0}^\infty  {\phi \left( {2n + 1} \right)\frac{{{{\left( { - 1} \right)}^{n}}}}{{\left( {2n + 1} \right)!{p^{2n + 1}}}}{x^{\left( {2n + 1} \right)k}}kdx}  = \phi \left( { - s} \right)\Gamma \left( s \right)\sin \left( {\frac{{\pi s}}{2}} \right)}.     
\end{equation*}
Now, replacing $s$ with $s$/$k$ yields
\begin{equation*}
\int\limits_0^\infty  {{x^{s - 1}}\sum\limits_{n = 0}^\infty  {\phi \left( {2n + 1} \right)\frac{{{{\left( { - 1} \right)}^{n}}}}{{\left( {2n + 1} \right)!{p^{2n + 1}}}}{x^{\left( {2n + 1} \right)k}}dx}  = \phi \left( {\frac{{ - s}}{k}} \right)\frac{{{p^{\frac{s}{k}}}}}{k}\Gamma \left( {\frac{s}{k}} \right)\sin \left( {\frac{{\pi s}}{{2k}}} \right)}.     
\end{equation*}
By further simplification, the desired result readily follows. (iii) can be derived in a similar manner. (ii) and (iv) are special cases of (i) and (iii) when $p=k$ respectively.
\end{proof}

\section{Appendix}
\textbf{A.1.} Take the derivatives both the sides of Eqn. (\ref{twopsix}) with respect to $s$ and then multiply and divide right hand side of the equation with $\Gamma(s)$ to get
\begin{equation*}
\int\limits_0^\infty  {{x^{s - 1}}\log x\left( {\zeta \left( 2 \right) - \zeta \left( 6 \right)\frac{1}{{2!}}{x^2} + \zeta \left( {10} \right)\frac{1}{{4!}}{x^4}...} \right)dx}= \zeta '\left( {2 - 2s} \right)\Gamma \left( s \right)\cos \left( {\frac{{\pi s}}{2}} \right)     
\end{equation*}
\begin{equation*}
\frac{1}{{\Gamma \left( s \right)}}\zeta \left( {2 - 4s} \right)\psi \left( s \right)\cos \left( {\frac{{\pi s}}{2}} \right) + \frac{\pi }{2}\zeta \left( {2 - 2s} \right)\Gamma \left( s \right)\sin \left( {\frac{{\pi s}}{2}} \right).    
\end{equation*}
Using the reflection formula for gamma function, we get
\begin{equation*}
\int\limits_0^\infty  {{x^{s - 1}}\log x\left( {\zeta \left( 2 \right) - \zeta \left( 6 \right)\frac{1}{{2!}}{x^2} + \zeta \left( {10} \right)\frac{1}{{4!}}{x^4}...} \right)dx} = \zeta '\left( {2 - 2s} \right)\frac{{\pi \cos \left( {\frac{{\pi s}}{2}} \right)}}{{\Gamma \left( {1 - s} \right)\sin \pi s}}    
\end{equation*}
\begin{equation*}
+ \frac{1}{\pi }\zeta \left( {2 - 2s} \right)\Gamma \left( {1 - s} \right)\sin{(\pi s)}\psi \left( s \right)\cos \left( {\frac{{\pi s}}{2}} \right) + \frac{\pi }{2}\zeta \left( {2 - 2s} \right)\frac{{\pi \sin \left( {\frac{{\pi s}}{2}} \right)}}{{\Gamma \left( {1 - s} \right)\sin \pi s}}.    
\end{equation*}
Now, taking limit both the sides of $s$ from $s \to 0 $, and applying L' Hospital's rule, we get
\begin{equation*}
\int\limits_0^\infty  {\frac{{\log x}}{x}\left( {\zeta \left( 2 \right) - \zeta \left( 6 \right)\frac{1}{{2!}}{x^2} + \zeta \left( {10} \right)\frac{1}{{4!}}{x^4}...} \right)dx}     
\end{equation*}
\begin{equation*}
= \mathop {\lim }\limits_{s \to 0} \zeta '\left( {2 - 2s} \right)\frac{{\pi \cos \left( {\frac{{\pi s}}{2}} \right)}}{{\Gamma \left( {1 - s} \right)\sin \pi s}} + \mathop {\lim }\limits_{s \to 0} \frac{\pi }{2}\zeta \left( {2 - 2s} \right)\frac{{\pi \sin \left( {\frac{{\pi s}}{2}} \right)}}{{\Gamma \left( {1 - s} \right)\sin \pi s}},    
\end{equation*}
\begin{equation*}
= \zeta '\left( 2 \right)\mathop {\lim }\limits_{s \to 0} \frac{{\pi \cos \left( {\frac{{\pi s}}{2}} \right)}}{{\Gamma \left( {1 - s} \right)\sin \pi s}} + \frac{{{\pi ^2}}}{4}\zeta \left( 2 \right)\mathop {\lim }\limits_{s \to 0} \frac{{\cos \left( {\frac{{\pi s}}{2}} \right)}}{{\cos \pi s}}    
\end{equation*}
\begin{equation*}
= 0 + \frac{{{\pi ^2}}}{2}\zeta \left( 2 \right) = \frac{{{\pi ^4}}}{{24}} .   
\end{equation*}
Therefore,
\begin{equation*}
\int\limits_0^\infty  {\frac{{\log x}}{x}\left( {\zeta \left( 2 \right) - \zeta \left( 6 \right)\frac{1}{{2!}}{x^2} + \zeta \left( {10} \right)\frac{1}{{4!}}{x^4}...} \right)dx}  = \frac{{{\pi ^4}}}{{24}}.    
\end{equation*}

\end{document}